\documentclass[fleqn,preprint,3p,a4paper]{elsarticle}

\usepackage{amssymb}
\usepackage{amsmath}
\usepackage{amsthm}
\usepackage{dcolumn}
\usepackage{endnotes}
\usepackage{tabularx}
\usepackage[matrix,arrow]{xy}
\usepackage{wasysym}

\newtheorem{theorem}{Theorem}[section]
\newtheorem{proposition}[theorem]{Proposition}
\newtheorem{lemma}[theorem]{Lemma}
\newtheorem{corollary}[theorem]{Corollary}

\theoremstyle{definition}
\newtheorem{definition}[theorem]{Definition}

\newtheorem{remark}[theorem]{Remark}

\newcommand{\cl}{{\rm cl}}
\newcommand{\ii}{{\rm int}}

\journal{}

\begin{document}

\begin{frontmatter}

\title{Countable quasicontinuous domains are quasialgebraic\tnoteref{t1}}
\tnotetext[t1]{This research was supported by the National Natural Science Foundation of China (Nos. 12071199, 11661057).}

\author[X. Xu]{Xiaoquan Xu}
\ead{xiqxu2002@163.com}
\address[X. Xu]{Fujian Key Laboratory of Granular Computing and Applications, Minnan Normal University, Zhangzhou 363000, China}

\begin{abstract}
We prove that every quasicontinuous domain that fails to be quasialgebraic admits the unit interval [0, 1]
as its monotone Lawson-continuous image. As a result, every countable quasicontinuous domain is quasialgebraic.
\end{abstract}

\begin{keyword}  Quasicontinuous domain; quasialgebraic domain; interpolation property; unit interval; Lawson topology

\MSC  06B35; 06F30; 54C30; 54F05

\end{keyword}

\end{frontmatter}

\section{Introduction}

In his pioneering work in what has come to be called ``domain theory" Dana Scott introduced and studied a crucial dcpos
which came to be called the continuous domains, which are appropriate for models of various typed and untyped lambda-calculi
and functional programming languages (see \cite{Abramsky-Jung-1994, GHKLMS-2003, Goubault-2013, Streicher-2006}). One of the main research directions of domain theory is to carry as much as possible of the theory of continuous domains and their algebraic forms to as general an ordered structure as possible. Two of the most successful generalizations have been quasicontinuous domains and quasialgebraic domains. They have attracted a considerable deal of attention (see \cite{Erne-2009, Erne-2018-2, GHKLMS-2003, Gierz-Lawson-Stralka-1983, Goubault-2013, Heckmann-1992, Heckmann-Keimel-2013, Jia-2018, Xu-2016-2}).

As we all know, every (quasi)algebraic domain is (quasi)continuous, but the converse is not true in general. For example, the unit interval [0, 1] with the usual order is a (quasi)continuous domain but not (quasi)algebraic. In \cite{Jia-Li-Luan-2025}, Jia et al. proved that every continuous domain that fails to be algebraic admits the unit interval $[0, 1]$ as its Scott-continuous retract. As a result, every countable domain is algebraic.

In this paper, drawing lessons from the main technique used in the proof of Urysohn's lemma in topology, we prove that if $P$ is a quasicontinuous domain that fails to be quasialgebraic, then there is a surjective monotone Lawson continuous function $f : P \rightarrow [0, 1]$. One immediate corollary is that every countable quasicontinuous domain is quasialgebraic.

\section{Quasicontinuous domains and quasialgebraic domains}

Firsr, we briefly recall some fundamental concepts and basic results about (quasi)continuous domains and (quasi)algebraic domains. For further details, we refer the reader to \cite{GHKLMS-2003, Goubault-2013}.

For a poset $P$ and $A\subseteq P$, let
$\mathord{\downarrow}A=\{x\in P: x\leq  a \mbox{ for some }
a\in A\}$ and $\mathord{\uparrow}A=\{x\in P: x\geq  a \mbox{
	for some } a\in A\}$. For  $x\in P$, we write
$\mathord{\downarrow}x$ for $\mathord{\downarrow}\{x\}$ and
$\mathord{\uparrow}x$ for $\mathord{\uparrow}\{x\}$. The set $A$
is called a \emph{lower set} (resp., an \emph{upper set}) if
$A=\mathord{\downarrow}A$ (resp., $A=\mathord{\uparrow}A$). Let $P^{(<\omega)}=\{F\subseteq P : F \mbox{~is a nonempty finite set}\}$ and $\mathbf{Fin} P=\{\uparrow F :$ $F\in P^{(<\omega)}\}$.
 For a nonempty subset $C$ of $P$, define $\mathrm{min}(C)=\{c\in C : c \mbox{~ is a minimal element of~} C\}$. The set of all natural numbers is denoted by $\mathbb{N}$. For a set $X$, let $|X|$ be the cardinality of $X$ and let $\omega=|\mathbb{N}|$.

A nonempty subset $D$ of a poset $P$ is \emph{directed} if every two
elements in $D$ have an upper bound in $D$. The set of all directed sets of $P$ is denoted by $\mathcal D(P)$. The poset $P$ is called a \emph{directed complete poset}, or \emph{dcpo} for short, if for any
$D\in \mathcal D(P)$, its supremum $\vee D$ exists in $P$.

A subset $U$ of a dcpo $P$ is said to be \emph{Scott open} if (i) $U=\mathord{\uparrow}U$, and (ii) $\vee D\in U$ implies $D\cap
U\neq\emptyset$ for any directed subset $D$. All Scott open subsets of $P$ form a topology,
called the \emph{Scott topology} on $P$ and
denoted by $\sigma(P)$. The space $\Sigma  P=(P,\sigma(P))$ is called the
\emph{Scott space} of $P$. The \emph{lower topology} on $P$, generated
by $\{P\setminus \uparrow x : x\in P\}$ (as a subbase), is denoted by $\omega (P)$. Dually, define the \emph{upper topology} on $P$ and denote it by $\upsilon(P)$. The topology generated
by $\omega (P)\bigcup\sigma (P)$ is called the \emph{Lawson topology} on $P$ and is denoted by $\lambda (P)$. The interval topology $\theta(P)$ on $P$ is the one generated by $\omega (P)\bigcup\upsilon (P)$. For the unit interval $[0, 1]$, it is easy to verify that $\lambda([0, 1])=\theta([0, 1])$. For two dcpos $P$ and $Q$, a mapping $f : P \rightarrow Q$ is called \emph{Lawson continuous}, if $f : (P, \lambda(P)) \longrightarrow (Q, \lambda(Q))$ is continuous.

The following is a key feature of the Scott topology (see \cite[Proposition II-2.1]{GHKLMS-2003}).

\begin{lemma}\label{lem-Scott-cont-charac}  Let $P, Q$ be dcpos and $f : P \longrightarrow Q$. Then the following two conditions are equivalent:
\begin{enumerate}[\rm (1)]
	\item $f$ is Scott continuous, that is, $f : \Sigma  P \longrightarrow \Sigma  Q$ is continuous.
	\item For any $D\in \mathcal D(P)$, $f(\vee D)=\vee f(D)$.
\end{enumerate}
\end{lemma}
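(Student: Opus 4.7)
The plan is to prove the equivalence by handling each direction separately, extracting monotonicity from each hypothesis before dealing with directed suprema, and exploiting the fact that in any dcpo a principal ideal $\da y$ is automatically Scott closed.

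For $(1)\Rightarrow(2)$, I would first note that Scott continuity forces monotonicity: a Scott open set is in particular an upper set, so its preimage under $f$ is an upper set, and from this $x\le y$ easily yields $f(x)\le f(y)$. Given $D\in\mathcal D(P)$, monotonicity makes $f(D)$ directed with $f(d)\le f(\vee D)$ for every $d\in D$, hence $\vee f(D)\le f(\vee D)$ (note that $\vee f(D)$ exists because $Q$ is a dcpo). For the reverse inequality I argue by contradiction: if $f(\vee D)\not\le\vee f(D)$, then $f(\vee D)\in Q\setminus\da\vee f(D)$, and this set is Scott open since the principal ideal $\da\vee f(D)$ is Scott closed in any dcpo. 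Scott continuity makes $f^{-1}(Q\setminus\da\vee f(D))$ a Scott open neighbourhood of $\vee D$, so it must meet $D$, producing some $d\in D$ with $f(d)\not\le\vee f(D)$, which contradicts the definition of $\vee f(D)$.

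For $(2)\Rightarrow(1)$, I would first recover monotonicity cheaply: for $x\le y$ the set $\{x,y\}$ is directed with supremum $y$, so (2) gives $f(y)=\vee\{f(x),f(y)\}$, i.e. $f(x)\le f(y)$. To verify Scott continuity, pick any Scott open $U\subseteq Q$; the upper-set condition on $f^{-1}(U)$ follows immediately from monotonicity. For the directed-sup condition, suppose $D\in\mathcal D(P)$ with $\vee D\in f^{-1}(U)$. Then $f(\vee D)=\vee f(D)\in U$ by (2); since $f(D)$ is directed and $U$ is Scott open, $f(D)\cap U\ne\emptyset$, so some $d\in D$ lies in $f^{-1}(U)$, as required.

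The argument is completely standard and I anticipate no serious obstacle. The only conceptual point worth flagging is the use of the fact that $\da y$ is Scott closed in an arbitrary dcpo, which is what licenses the separation step in $(1)\Rightarrow(2)$; everything else is a matter of cleanly separating the monotonicity content from the directed-sup content of each hypothesis.
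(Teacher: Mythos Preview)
Your argument is correct and is essentially the standard proof of this classical fact. Note that the paper does not actually supply a proof of this lemma; it simply cites \cite[Proposition~II-2.1]{GHKLMS-2003}, and your proposal matches the textbook argument found there.
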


Rudin's Lemma, given by Mary Rudin in \cite{Rudin-1981}, is a useful tool in non-Hausdorff topology and plays a crucial role in domain theory (see \cite{Erne-2009, Erne-2018-2, GHKLMS-2003, Gierz-Lawson-Stralka-1983, Goubault-2013, Heckmann-1992, Heckmann-Keimel-2013, Jia-2018, Jung-1989, Xu-2016-2}). In this paper we will use the following Jung's version of Rudin's Lemma (see \cite[Theorem 4.11]{Jung-1989}, \cite[Lemma III-3.3]{GHKLMS-2003}, or \cite[Corollary 3.5]{Heckmann-Keimel-2013}).

\begin{lemma}\label{lem-Rudin-lemma}  Let $P$ be a poset and $\{F_i : i\in I\}\subseteq P^{(<\omega)}$. If $\{\uparrow F_i : i\in I\}$ is a filtered family, then there is a directed subset $D$ of $\bigcup_{i\in I}F_i$ that meets all $F_i$.
\end{lemma}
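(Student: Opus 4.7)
The natural approach is to invoke Zorn's lemma. I would form the poset $\mathcal{S}$ consisting of indexed subfamilies $\{G_i\}_{i\in I}$ with $\emptyset\neq G_i\subseteq F_i$ and such that $\{\uparrow G_i:i\in I\}$ is still a filtered family, ordered by componentwise reverse inclusion, namely $\{G_i\}\preceq\{G_i'\}$ iff $G_i'\subseteq G_i$ for every $i$. The original family $\{F_i\}$ witnesses that $\mathcal{S}$ is nonempty. The crucial observation is that each $F_i$ is finite, so any descending chain in $\mathcal{S}$ has componentwise intersections that remain nonempty; the delicate point is to verify that filteredness is preserved under such intersections. I would handle this by a stabilization argument, exploiting finiteness of $F_i, F_j$ to show that for each pair $i,j$ the set of candidate witnesses $k$ (with $\uparrow G_k^{\alpha}\subseteq\uparrow G_i^{\alpha}\cap\uparrow G_j^{\alpha}$) cannot vary unboundedly along the chain.

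Granting the chain-closure step, Zorn yields a componentwise minimal family $\{G_i^*\}$; set $D=\bigcup_{i\in I}G_i^*$, which meets every $F_i$ since $\emptyset\neq G_i^*\subseteq F_i$. To check that $D$ is directed, I would take $x\in G_i^*$ and $y\in G_j^*$ and use filteredness to pick $k\in I$ with $\uparrow G_k^*\subseteq\uparrow G_i^*\cap\uparrow G_j^*$. Any $z\in G_k^*$ dominates \emph{some} element of $G_i^*$ and \emph{some} element of $G_j^*$; the task is to upgrade ``some element'' to ``$x$ and $y$'' respectively. This is where the minimality of $\{G_i^*\}$ enters: if no element of $G_k^*$ were to lie above $x$, then one could replace $G_i^*$ by $G_i^*\setminus\{x\}$ without destroying filteredness for the pair $(i,k)$ (nor for any other pair, since only component $i$ shrinks), contradicting minimality. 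The same argument for $y$ yields a single $z\in G_k^*\subseteq D$ with $z\geq x$ and $z\geq y$.

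The principal obstacle is a technical one that appears in both steps: a priori the witness $k$ for filteredness of a pair $(i,j)$ may depend on the chain index $\alpha$, and the minimality extraction has to be careful about which component is being shrunk so that no unintended pair is broken. The indispensable ingredient throughout is that each $F_i$ is a finite set, which is what lets one upgrade local ``cofinal'' assertions to global ``stable'' assertions in a purely combinatorial manner, without any topological or completeness hypothesis on $P$.
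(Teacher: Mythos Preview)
The paper does not give its own proof of this lemma; it merely cites Jung, Gierz et al., and Heckmann--Keimel. So there is nothing in the paper to compare against. Your overall strategy (Zorn on componentwise-minimal filtered subfamilies, then argue directedness from minimality) is indeed the standard one, and your chain-closure step is fine: once $G_i^{\alpha}$ and $G_j^{\alpha}$ have stabilised at some index $\alpha_0$, any witness $k$ at level $\alpha_0$ gives $\uparrow G_k\subseteq\uparrow G_k^{\alpha_0}\subseteq\uparrow G_i\cap\uparrow G_j$.

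However, your directedness argument has a genuine gap. When you delete $x$ from $G_i^*$ alone, you claim filteredness survives ``since only component $i$ shrinks''. But shrinking $G_i^*$ makes every target $\uparrow H_i\cap\uparrow G_q^*$ \emph{smaller}, so for pairs $(i,q)$ with $q\neq j$ the old witness need not work: an element of $G_r^*$ may lie above $x$ and above no other element of $G_i^*$. Your hypothesis (``no element of $G_k^*$ lies above $x$'') controls only the one witness $k$ for the one pair $(i,j)$, not witnesses for other pairs. A second gap: even if your contradiction succeeded, you would only get \emph{some} $z_1\in G_k^*$ with $z_1\geq x$ and \emph{some} $z_2\in G_k^*$ with $z_2\geq y$; nothing forces $z_1=z_2$, so the final sentence (``a single $z$'') is unjustified.

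The standard repair is to shrink \emph{all} components at once. Fix $x\in G_{i_0}^*$ and suppose, for contradiction, that $G_m^*\not\subseteq\uparrow x$ for every $m$. Set $H_m:=G_m^*\setminus\uparrow x$; each $H_m$ is nonempty, and if $\uparrow G_r^*\subseteq\uparrow G_p^*\cap\uparrow G_q^*$ then for $z\in H_r$ the element $w\in G_p^*$ with $w\leq z$ cannot lie in $\uparrow x$ (else $z\in\uparrow x$), so $w\in H_p$, and likewise for $q$. Thus $\{H_m\}$ is a strictly smaller filtered family, contradicting minimality. Hence for every $x\in D$ there is $m_x$ with $G_{m_x}^*\subseteq\uparrow x$; now given $x,y\in D$, choose $k$ with $\uparrow G_k^*\subseteq\uparrow G_{m_x}^*\cap\uparrow G_{m_y}^*$, and \emph{any} $z\in G_k^*$ satisfies $z\geq x$ and $z\geq y$.
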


As a corollary of Lemma \ref{lem-Rudin-lemma}, we have the following useful result (cf. \cite[Corollary 3.9]{Heckmann-Keimel-2013}).

\begin{corollary}\label{cor-Rudin-lemma}  Let $P$ be a dcpo, $U$ a Scott-open set of $P$ and $\{\uparrow F_i : i\in I\}\subseteq \mathbf{Fin}~P$ be a filtered family with $\bigcap_{i\in I}\uparrow F_i\subseteq U$. Then $\uparrow F_i\subseteq U$ for some $i\in I$.
\end{corollary}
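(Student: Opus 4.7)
The plan is to argue by contradiction using Lemma \ref{lem-Rudin-lemma}. Suppose that $\uparrow F_i \not\subseteq U$ for every $i \in I$. Since $U$ is an upper set (being Scott open), this forces $F_i \setminus U \neq \emptyset$ for each $i$: indeed, if $F_i \subseteq U$ then by the upper-set property already $\uparrow F_i \subseteq U$. Set $F'_i := F_i \setminus U$, a nonempty finite subset of $P \setminus U$.

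The first substantive step is to check that $\{\uparrow F'_i : i \in I\}$ is again filtered. Given $i, j \in I$, choose $k \in I$ with $\uparrow F_k \subseteq \uparrow F_i \cap \uparrow F_j$. For any $f \in F'_k$, there is some $f' \in F_i$ with $f' \leq f$; since $f \notin U$ and $U$ is an upper set, $f'$ cannot lie in $U$ either, so $f' \in F'_i$. Hence $F'_k \subseteq \uparrow F'_i$, and symmetrically $F'_k \subseteq \uparrow F'_j$, which gives $\uparrow F'_k \subseteq \uparrow F'_i \cap \uparrow F'_j$.

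Now I would apply Lemma \ref{lem-Rudin-lemma} to $\{F'_i : i \in I\}$ to obtain a directed set $D \subseteq \bigcup_{i \in I} F'_i \subseteq P \setminus U$ meeting every $F'_i$. Because $P$ is a dcpo, $\vee D$ exists, and because $P \setminus U$ is Scott closed (hence closed under directed suprema), $\vee D \in P \setminus U$. On the other hand, any chosen witness $d_i \in D \cap F'_i \subseteq F_i$ satisfies $d_i \leq \vee D$, so $\vee D \in \uparrow F_i$ for every $i \in I$. Thus $\vee D \in \bigcap_{i \in I} \uparrow F_i \subseteq U$, contradicting $\vee D \notin U$.

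The main technical point is the passage from $\{\uparrow F_i\}$ to $\{\uparrow F'_i\}$: one must ensure that thinning each $F_i$ to $F_i \setminus U$ preserves filteredness of the associated family of finitely generated upper sets. This relies essentially on the upper-set property of $U$, which guarantees that any downward witness in $F_i$ of a point outside $U$ is itself outside $U$. The remaining steps — invoking Rudin's Lemma and using Scott closedness of $P \setminus U$ — are routine.
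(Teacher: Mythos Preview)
Your argument is correct and is precisely the standard derivation of this corollary from Rudin's Lemma that the paper has in mind (it states the result as an immediate consequence of Lemma~\ref{lem-Rudin-lemma} without spelling out a proof). The key step you flag --- that restricting each $F_i$ to $F_i\setminus U$ preserves filteredness because $U$ is an upper set --- is exactly the point, and the rest is routine.
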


For a dcpo $P$ and $A, B\subseteq P$, we say $A$ is \emph{way below} $B$, written $A\ll B$, if for each $D\in \mathcal D(P)$, $\vee D\in \uparrow B$ implies $D\cap \uparrow A\neq \emptyset$. For $B=\{x\}$, a singleton, $A\ll B$ is
written $A\ll x$ for short. For $x\in P$, let $\Downarrow x = \{u\in P : u\ll x\}$ and $w(x)=\{F\in P^{(<\omega)} : F\ll x\}$. Let $K(P)=\{k\in P : k\ll k\}$. Points in $K(P)$ are called \emph{compact elements} of $P$.

\begin{definition}\label{def-continuous-domain} Let $P$ be a dcpo.
\begin{enumerate}[\rm (1)]
\item $P$ is called a \emph{continuous domain}, if for each $x\in P$, $\Downarrow x$ is directed
and $x=\vee\Downarrow x$.
\item $P$ is called an \emph{algebraic domain}, if for each $x\in P$, $K(P)\cap \downarrow x$ is directed and $x=\vee (K(P)\cap \downarrow x)$.
\item $P$ is called a \emph{quasicontinuous domain}, if for each $x\in P$, $\{\uparrow F : F\in w(x)\}$ is filtered and $\uparrow x=\cap
\{\uparrow F : F\in w(x)\}$.
\item $P$ is called a \emph{quasialgebraic domain}, if for each $x\in P$, $\{\uparrow F\in \mathbf{Fin} P : x\in \uparrow F, F\ll F\}$ is filtered and $\uparrow x=\cap \{\uparrow F\in \mathbf{Fin} P : x\in \uparrow F, F\ll F\}$.
\end{enumerate}
\end{definition}

\begin{proposition}\label{prop-continuous-charact} (\cite[Proposition II-2.11 and Theorem II-2.14]{GHKLMS-2003}) For a dcpo $P$, the following two conditions are equivalent:
\begin{enumerate}[\rm (1)]
\item $P$ is continuous.
\item For any $x\in P$ and any $U\in \sigma (P)$, there is $u\in P$ such that $x\in \ii_{\sigma(P)}\uparrow u\subseteq \uparrow u\subseteq U$.
\end{enumerate}
\end{proposition}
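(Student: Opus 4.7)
The plan is to prove the equivalence via the auxiliary observation that, in any dcpo, $\ii_{\sigma(P)}\uparrow u\subseteq\{y:u\ll y\}$, with the reverse inclusion being where continuity enters. (I read the statement with the implicit hypothesis $x\in U$.)

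For the direction (1)$\Rightarrow$(2), fix $x\in U\in\sigma(P)$. Since $\Downarrow x$ is directed with $\vee\Downarrow x=x\in U$, Scott-openness of $U$ supplies some $u\in\Downarrow x\cap U$; being upper, $U$ then contains $\uparrow u$. For $x\in\ii_{\sigma(P)}\uparrow u$, I will prove that $\{y:u\ll y\}$ is Scott open, so that it witnesses membership of $x$ in the Scott interior of $\uparrow u$. Upper-set closure is immediate; closure under directed sups reduces to the interpolation property: if $u\ll\vee D$, pick $v$ with $u\ll v\ll \vee D$, and then the definition of $v\ll\vee D$ produces $d\in D$ with $d\geq v$, hence $u\ll d$. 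Interpolation itself is the technical crux: given $a\ll b$, I consider $T=\bigcup_{c\ll b}\Downarrow c$, verify it is directed with supremum $b$ (using directedness of $\Downarrow b$ and of each $\Downarrow c$), and then $a\ll b=\vee T$ yields some $t\in T$ above $a$, whence $a\ll c\ll b$ for the $c$ with $t\ll c$.

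For the direction (2)$\Rightarrow$(1), I introduce the auxiliary set $B_x=\{u:x\in\ii_{\sigma(P)}\uparrow u\}$ and show it is directed, contained in $\Downarrow x$, and has supremum $x$. Directedness follows from applying (2) to the Scott-open set $\ii_{\sigma(P)}\uparrow u_1\cap\ii_{\sigma(P)}\uparrow u_2$ around $x$. Containment in $\Downarrow x$: for $u\in B_x$ and a directed $D$ with $\vee D\geq x$, the upper Scott-open set $\ii_{\sigma(P)}\uparrow u$ catches $\vee D$ and so meets $D$, producing $d\geq u$. For $\vee B_x=x$: if an upper bound $y$ of $B_x$ were to satisfy $x\not\leq y$, then $P\setminus\downarrow y$ is Scott open and contains $x$, so (2) would produce $u\in B_x$ outside $\downarrow y$, contradicting $u\leq y$. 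Finally, given $u_1,u_2\in\Downarrow x$, the relations $u_i\ll x=\vee B_x$ place each $u_i$ below some $v_i\in B_x$; a common upper bound $v\in B_x\subseteq\Downarrow x$ of $v_1,v_2$ from directedness of $B_x$ shows that $\Downarrow x$ is directed with supremum $x$.

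I expect the main obstacle to be the interpolation property in the forward direction; once it is in hand, every remaining step amounts to routine bookkeeping with the definitions of Scott-openness, upper sets, and the way-below relation.
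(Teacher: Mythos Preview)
Your proof is correct, and you were right to read condition (2) with the implicit hypothesis $x\in U$ (without it the statement is plainly false). Note, however, that the paper does not give its own proof of this proposition: it is quoted from \cite[Proposition II-2.11 and Theorem II-2.14]{GHKLMS-2003} and left unproved in the present manuscript. What you have supplied is essentially the standard argument from that reference: the forward direction rests on the interpolation property (your proof via $T=\bigcup_{c\ll b}\Downarrow c$ is the textbook one) to show each $\Uparrow u$ is Scott open, and the backward direction recovers $\Downarrow x$ from the auxiliary directed set $B_x=\{u:x\in\ii_{\sigma(P)}\uparrow u\}$. There is no gap to report.
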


\begin{proposition}\label{prop-algebraic-charact} (\cite[Corollary II-2.15]{GHKLMS-2003}) For a dcpo $P$, the following conditions are equivalent:
\begin{enumerate}[\rm (1)]
\item $P$ is algebraic.
\item For any $x\in P$ and any $U\in \sigma (P)$ with $x\in U$, there is $k\in K(P)$ such that $x\in \uparrow k\subseteq U$.
\item The Scott topology $\sigma (P)$ has a basis of sets $\uparrow k$ where $k\in K(P)$.
\end{enumerate}
\end{proposition}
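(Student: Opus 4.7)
The plan is to prove the cycle $(1)\Rightarrow(2)\Rightarrow(3)\Rightarrow(1)$, each step essentially translating one formulation of compactness into another via the Scott topology.

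For $(1)\Rightarrow(2)$, fix $x\in U\in\sigma(P)$. Algebraicity says the directed set $K(P)\cap\mathord{\downarrow}x$ has supremum $x\in U$, so by Scott-openness of $U$ some $k\in K(P)\cap\mathord{\downarrow}x$ lies in $U$; since $U$ is an upper set, $x\in\mathord{\uparrow}k\subseteq U$. For $(2)\Rightarrow(3)$, the key observation is that $\mathord{\uparrow}k$ is itself Scott-open whenever $k\in K(P)$, which is literally the definition of $k\ll k$ applied to singletons: if $D$ is directed with $\vee D\in\mathord{\uparrow}k$, then $k\ll k$ forces $D\cap\mathord{\uparrow}k\neq\emptyset$. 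Combined with $(2)$, this exhibits $\{\mathord{\uparrow}k:k\in K(P)\}$ as a basis for $\sigma(P)$.

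For $(3)\Rightarrow(1)$, I would first use the basis property, applied to $P$ itself (to obtain non-emptiness of $K(P)\cap\mathord{\downarrow}x$) and to $\mathord{\uparrow}k_1\cap\mathord{\uparrow}k_2$ for $k_1,k_2\in K(P)\cap\mathord{\downarrow}x$ (Scott-open since each $\mathord{\uparrow}k_i$ is), to conclude that $K(P)\cap\mathord{\downarrow}x$ is directed. To then show $x=\vee(K(P)\cap\mathord{\downarrow}x)$, suppose $y$ is an upper bound with $x\not\leq y$. The set $P\setminus\mathord{\downarrow}y$ is Scott-open and contains $x$, so by $(3)$ some $k\in K(P)$ satisfies $k\leq x$ and $k\not\leq y$, contradicting that $y$ bounds $K(P)\cap\mathord{\downarrow}x$.

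The only conceptual hurdle is this last separation step: one must rephrase the equation $x=\vee(K(P)\cap\mathord{\downarrow}x)$ as a statement that can be tested by Scott-open neighbourhoods, and the non-routine ingredient is recognising that $P\setminus\mathord{\downarrow}y$ is Scott-open for every $y\in P$ — it is an upper set, and a directed family sitting in $\mathord{\downarrow}y$ cannot have its supremum escape $\mathord{\downarrow}y$. Everything else is straightforward bookkeeping with the way-below relation and the upper-set property of Scott-open sets.
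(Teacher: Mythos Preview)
Your argument is correct: the cycle $(1)\Rightarrow(2)\Rightarrow(3)\Rightarrow(1)$ goes through exactly as you describe, and the only point requiring care---that $P\setminus\mathord{\downarrow}y$ is Scott-open---is handled properly. Note, however, that the paper does not supply its own proof of this proposition; it merely quotes it as \cite[Corollary II-2.15]{GHKLMS-2003}, so there is no in-paper argument to compare against---your write-up is the standard textbook verification.
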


For quasicontinuous domains and quasialgebraic domains, we have the following analogous characterizations.

\begin{proposition}\label{prop-quasicontinuous-charact} (\cite[Proposition II-2.11 and Theorem II-2.14]{GHKLMS-2003}) For a dcpo $P$, the following two conditions are equivalent:
\begin{enumerate}[\rm (1)]
\item $P$ is quasicontinuous.
\item For any $x\in P$ and any $U\in \sigma (P)$ with $x\in U$, there is $F\in P^{(<\omega)}$ such that $x\in \ii_{\sigma(P)}\uparrow F\subseteq \uparrow F\subseteq U$.
\end{enumerate}
\end{proposition}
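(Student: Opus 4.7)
My plan is to mirror the proof of Proposition 2.5 for continuous domains, substituting the finitely generated upper set $\uparrow F$ for the principal set $\uparrow u$ and using Corollary 2.3 in place of bare Scott accessibility. For $(1) \Rightarrow (2)$: Fix $x \in U \in \sigma(P)$. By quasicontinuity the family $\{\uparrow F : F \in w(x)\}$ is filtered in $\mathbf{Fin}\,P$ with intersection $\uparrow x \subseteq U$, so Corollary 2.3 yields some $F_0 \in w(x)$ with $\uparrow F_0 \subseteq U$. The remaining task is to show $x \in \ii_{\sigma(P)}\uparrow F_0$, which reduces to verifying that $V := \{y \in P : F_0 \ll y\}$ is Scott-open (it is upward-closed, contained in $\uparrow F_0$, and contains $x$). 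Scott-openness rests on the \emph{interpolation property} for quasicontinuous domains: whenever $F \ll z$, there exists $H \in w(z)$ with $F \ll h$ for every $h \in H$. Granting interpolation, $\vee D \in V$ produces $H \ll \vee D$ with $F_0 \ll h$ for all $h \in H$; directedness supplies $d \in D \cap \uparrow H$; and right-monotonicity of $\ll$ then yields $F_0 \ll h \le d \Rightarrow F_0 \ll d$, i.e., $d \in V$.

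The interpolation property is the main technical obstacle. I would approach it via a weaker \emph{refinement} step first: $F \ll z$ implies the existence of $H \in w(z)$ with $H \subseteq \uparrow F$. The refinement is proved by contradiction --- if no such $H$ existed, each set $H \setminus \uparrow F$ would be nonempty for $H \in w(z)$, the family $\{\uparrow(H \setminus \uparrow F) : H \in w(z)\}$ would be filtered (by filteredness of $w(z)$ together with upward-closure of $\uparrow F$), and Lemma 2.2 would produce a directed $D \subseteq P \setminus \uparrow F$ with $\vee D \ge z$, contradicting $F \ll z$. Upgrading refinement to the full interpolation statement is the delicate step: the analogous ``bad'' set $\{h \in H : F \not\ll h\}$ runs into the obstruction that the element $d \in \uparrow F$ that Rudin's Lemma delivers does not \emph{a priori} satisfy $F \ll d$, so an iterative argument (or the hypercontinuous-lattice structure of $\sigma(P)$) is needed to close the loop. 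This is where I expect the most care.

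For $(2) \Rightarrow (1)$: Introduce $\mathsf{fin}(x) := \{F \in P^{(<\omega)} : x \in \ii_{\sigma(P)}\uparrow F\}$, which always sits inside $w(x)$. Under $(2)$, applying $(2)$ to $\ii_{\sigma(P)}\uparrow F_1 \cap \ii_{\sigma(P)}\uparrow F_2$ shows $\mathsf{fin}(x)$ is filtered, and applying $(2)$ to the Scott-open $P \setminus \downarrow y$ for each $y \not\ge x$ produces a separating $F \in \mathsf{fin}(x)$ with $y \notin \uparrow F$, yielding $\bigcap\{\uparrow F : F \in \mathsf{fin}(x)\} = \uparrow x$. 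These properties lift to the full $w(x)$ once I show every $F \in w(x)$ is refined by some $G \in \mathsf{fin}(x)$ --- a direct rerun of the refinement argument above, now with the filtered family $\{\uparrow G : G \in \mathsf{fin}(x)\}$ in place of $\{\uparrow H : H \in w(z)\}$ --- whence filteredness of $\{\uparrow F : F \in w(x)\}$ and the intersection equality transfer through the refinements.
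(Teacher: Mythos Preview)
The paper does not supply its own proof of this proposition; it is quoted verbatim from \cite[Proposition II-2.11 and Theorem II-2.14]{GHKLMS-2003} and used as a black box. So there is no in-paper argument to compare against, and your proposal is being measured against the standard proof in the cited reference.

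Your outline is essentially that standard proof, and both directions are sound. The $(2)\Rightarrow(1)$ argument is complete: the auxiliary family $\mathsf{fin}(x)$ does the work, and your Rudin-type refinement (each $F\in w(x)$ is refined by some $G\in\mathsf{fin}(x)$) is correct as written. For $(1)\Rightarrow(2)$ you correctly isolate interpolation as the crux and prove the preliminary refinement step cleanly. The one genuine gap is the one you already flag: upgrading refinement to full interpolation. Your ``bad set'' $\{h\in H: F\not\ll h\}$ does not work directly, as you note, and an iterative fix is awkward. The clean route (and the one in \cite{GHKLMS-2003}) is a \emph{two-level} family: for each $H=\{h_1,\dots,h_n\}\in w(z)$ and each choice $G_i\in w(h_i)$, form $G=G_1\cup\cdots\cup G_n$; one checks that $\{\uparrow G\}$ is filtered with intersection $\uparrow z$, so your refinement argument applied to \emph{this} family yields some $G$ with $G\subseteq\uparrow F$, and since $G\ll H$ one gets $F\ll H$ (hence $F\ll h$ for every $h\in H$). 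With that step filled in, your proof is complete and matches the reference.
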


\begin{proposition}\label{prop-quasialgebraic-charact} (\cite[Exercise II-3.24]{GHKLMS-2003}) For a dcpo $P$, the following conditions are equivalent:
\begin{enumerate}[\rm (1)]
\item $P$ is quasialgebraic.
\item For any $x\in P$ and any $U\in \sigma (P)$ with $x\in U$, there is $F\in P^{(<\omega)}$ such that $x\in \ii_{\sigma(P)}\uparrow F=\uparrow F\subseteq U$.
\item The Scott topology $\sigma (P)$ has a basis of sets $\uparrow F$ where $F\in P^{(<\omega)}$.
\end{enumerate}
\end{proposition}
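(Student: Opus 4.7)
The plan is to prove the cycle $(1)\Rightarrow(2)\Rightarrow(3)\Rightarrow(1)$, with the pivotal observation being that for a finite set $F\in P^{(<\omega)}$, the condition $F\ll F$ is exactly the same as saying that $\uparrow F$ is Scott-open: the set $\uparrow F$ is automatically an upper set, so the remaining clause of Scott-openness, namely ``$\vee D\in\uparrow F$ implies $D\cap\uparrow F\neq\emptyset$ for every directed $D$'', unfolds literally into $F\ll F$. This equivalence ties the definition of quasialgebraicity to condition (2), because $\uparrow F = \ii_{\sigma(P)}\uparrow F$ is nothing but the assertion that $\uparrow F$ is Scott-open.

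For $(1)\Rightarrow(2)$, fix $x\in U\in\sigma(P)$. Since $P$ is quasialgebraic, $\mathcal{F}_x:=\{\uparrow F\in\mathbf{Fin}\,P : x\in\uparrow F,\ F\ll F\}$ is filtered and $\bigcap\mathcal{F}_x=\uparrow x\subseteq U$. Corollary \ref{cor-Rudin-lemma} then yields some $\uparrow F\in\mathcal{F}_x$ with $\uparrow F\subseteq U$. Because $F\ll F$, the set $\uparrow F$ is Scott-open, so $\uparrow F=\ii_{\sigma(P)}\uparrow F$, delivering the required $F$. The implication $(2)\Rightarrow(3)$ is immediate: the collection of all such $\uparrow F$ is closed under taking Scott-open refinements around each point, so it is a basis of $\sigma(P)$; and the fact that each basis element $\uparrow F$ is Scott-open gives back $\uparrow F=\ii_{\sigma(P)}\uparrow F$, making $(3)\Rightarrow(2)$ equally transparent.

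The remaining direction $(2)\Rightarrow(1)$ (equivalently, $(3)\Rightarrow(1)$) is where one must be careful. Fix $x\in P$. To prove the family $\mathcal{F}_x$ is filtered, take $\uparrow F_1,\uparrow F_2\in\mathcal{F}_x$; then $U:=\uparrow F_1\cap\uparrow F_2$ is Scott-open and contains $x$, so (2) provides $F_3$ with $x\in\uparrow F_3=\ii_{\sigma(P)}\uparrow F_3\subseteq U$, and the pivotal observation gives $F_3\ll F_3$, so $\uparrow F_3\in\mathcal{F}_x$ refines both. For the equality $\uparrow x=\bigcap\mathcal{F}_x$, the inclusion $\subseteq$ is clear from $x\in\uparrow F$ giving $\uparrow x\subseteq\uparrow F$. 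For $\supseteq$, suppose $y\notin\uparrow x$; using the well-known fact that the specialization order of $\sigma(P)$ is the original order on $P$ (so $\uparrow x=\bigcap\{U\in\sigma(P):x\in U\}$), choose $U\in\sigma(P)$ with $x\in U$ and $y\notin U$; then (2) supplies $F\in P^{(<\omega)}$ with $x\in\uparrow F=\ii_{\sigma(P)}\uparrow F\subseteq U$, so $F\ll F$, $\uparrow F\in\mathcal{F}_x$, and $y\notin\uparrow F$, finishing the proof.

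The main potential obstacle is the step producing $F_3$ and, in the last paragraph, separating $y$ from $x$ by a Scott-open set; the first is handled by applying (2) to the intersection of two Scott-open sets, and the second by the standard identification of the specialization order of the Scott topology with the given order. Everything else reduces to the \emph{$F\ll F$ $\Leftrightarrow$ $\uparrow F$ Scott-open} dictionary and the Rudin-style Corollary \ref{cor-Rudin-lemma}, both already established above.
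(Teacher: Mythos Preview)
Your proof is correct. The key dictionary $F\ll F\Leftrightarrow \uparrow F\in\sigma(P)$ is exactly right, and your use of Corollary~\ref{cor-Rudin-lemma} for $(1)\Rightarrow(2)$ and of the identity $\uparrow x=\bigcap\{U\in\sigma(P):x\in U\}$ for the reverse inclusion in $(2)\Rightarrow(1)$ are both sound; the latter could be shortened by noting directly that $P\setminus\downarrow y$ is a Scott-open set containing $x$ whenever $y\notin\uparrow x$. One tiny omission: in $(2)\Rightarrow(1)$ you should also note that $\mathcal{F}_x\neq\emptyset$ (apply (2) to $x\in P$), since ``filtered'' requires nonemptiness.

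As for comparison with the paper: there is nothing to compare. The paper does not supply a proof of this proposition; it merely cites it as \cite[Exercise~II-3.24]{GHKLMS-2003}. Your argument is the standard one and would serve perfectly well as the missing justification.
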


\begin{definition}\label{def-MC-poset}  (\cite[Definition III-2.1]{GHKLMS-2003}, \cite[Definition 2.2]{Kou-Liu-Luo-2003})  A dcpo $P$ is said to be a \emph{meet-continuous domain} if for any $x\in P$ and any
$D\in \mathcal D(P)$, $x \leq \vee D$ implies $x\in \ii_{\sigma(P)} (\downarrow\cap \downarrow D)$.
\end{definition}

\begin{lemma}\label{lem-MC-poset-charac} (\cite[Theorem 2 and Theorem 3]{Erne-2009}, \cite[Theorem 2.4]{Kou-Liu-Luo-2003}) For a dcpo $P$, the following conditions are equivalent:
\begin{enumerate}[\rm (1)]
\item $P$ is meet-continuous.

\item The lattice of Scott closed sets of $P$ is a complete Heyting algebra.
\item For each $A\subseteq X$ and $x\in X$, $x \in \cl_{\sigma(P)}A$ implies $x\in \cl_{\sigma(P)}(\downarrow x \cap \downarrow A)$.
\item For any $A, B \subseteq P$, $\cl_{\sigma(P)}(\downarrow A\cap \downarrow B)=\cl_{\sigma(P)}\downarrow A \cap \cl_{\sigma(P)} \downarrow B$.
\item For any $U, V\subseteq P$, $\ii_{\sigma(P)} (\uparrow U\cup \uparrow V)=\ii_{\sigma(P)} \uparrow U\cup \ii_{\sigma(P)} \uparrow V$.
\item For $U\in \sigma(P)$ and $x\in X$, $\uparrow (U\cap \downarrow x)\in \sigma(P)$.
\end{enumerate}
\end{lemma}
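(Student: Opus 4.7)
My plan is to establish the cycle $(1)\Rightarrow(6)\Rightarrow(3)\Rightarrow(1)$ as the backbone, then attach $(4)$ via $(3)\Leftrightarrow(4)$, and handle $(5)$ by order-complementation against $(4)$, and $(2)$ by the frame distributive law. Throughout I will use the fact that for any $y\in P$ the principal ideal ${\downarrow}y$ is Scott closed (directed suprema of sets below $y$ remain below $y$), so $\cl_{\sigma(P)}{\downarrow}y={\downarrow}y$ and $\cl_{\sigma(P)}{\downarrow}A\supseteq{\downarrow}A$. (I read the definition of meet-continuity in Definition~\ref{def-MC-poset} as $x\leq\vee D\Rightarrow x\in\cl_{\sigma(P)}({\downarrow}x\cap{\downarrow}D)$.)

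\medskip
\noindent For $(1)\Rightarrow(6)$: fix $U\in\sigma(P)$ and $x\in P$; the set $\uparrow(U\cap{\downarrow}x)$ is visibly an upper set, and if $D$ is directed with $\vee D\geq y$ for some $y\in U\cap{\downarrow}x$, meet-continuity places $y$ in $\cl_{\sigma(P)}({\downarrow}y\cap{\downarrow}D)$, so the Scott-open neighbourhood $U$ of $y$ meets ${\downarrow}y\cap{\downarrow}D$, giving some $z\in U\cap{\downarrow}x$ with $z\leq d\in D$, and hence $d\in\uparrow(U\cap{\downarrow}x)$. For $(6)\Rightarrow(3)$: given $x\in\cl_{\sigma(P)}A$ and a Scott-open $U\ni x$, the set $\uparrow(U\cap{\downarrow}x)$ is Scott open by (6), contains $x$, and therefore meets $A$; any witness $a\geq u\in U\cap{\downarrow}x$ yields $u\in U\cap{\downarrow}x\cap{\downarrow}A$. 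For $(3)\Rightarrow(1)$: simply specialize to $A=D$ directed, noting $x\leq\vee D\in\cl_{\sigma(P)}D$ and $\cl_{\sigma(P)}D$ is a lower set.

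\medskip
\noindent For $(4)\Rightarrow(3)$ one takes $B=\{x\}$, so $\cl_{\sigma(P)}{\downarrow}B={\downarrow}x$, and (4) collapses to $x\in\cl_{\sigma(P)}({\downarrow}x\cap{\downarrow}A)$. For $(3)\Rightarrow(4)$: the inclusion $\cl_{\sigma(P)}({\downarrow}A\cap{\downarrow}B)\subseteq\cl_{\sigma(P)}{\downarrow}A\cap\cl_{\sigma(P)}{\downarrow}B$ is by monotonicity; for the reverse, take $x$ in the intersection and any Scott-open $U\ni x$. Applying (3) to $B$ produces $y\in U\cap{\downarrow}x\cap{\downarrow}B$; since $y\leq x\in\cl_{\sigma(P)}A$ and Scott-closed sets are lower, $y\in\cl_{\sigma(P)}A$, so applying (3) again to $A$ at $y$ with the open neighbourhood $U$ gives $w\in U\cap{\downarrow}y\cap{\downarrow}A\subseteq U\cap{\downarrow}A\cap{\downarrow}B$. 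The equivalence $(5)\Leftrightarrow(4)$ comes from complementation: for arbitrary $U,V\subseteq P$ the sets $P\setminus\uparrow U$, $P\setminus\uparrow V$ are lower and every lower set arises this way; using $\ii_{\sigma(P)}(P\setminus L)=P\setminus\cl_{\sigma(P)}L$ for a lower set $L$, (5) translates verbatim into (4). Finally $(4)\Leftrightarrow(2)$: writing any family $\{D_i\}$ of Scott-closed sets one has $\vee_i D_i=\cl_{\sigma(P)}\bigcup_i D_i$ in $\mathcal C(\sigma(P))$, and since $\bigcup_i D_i$ is automatically a lower set, (4) applied to $C$ and $\bigcup_i D_i$ yields the frame law $C\wedge\vee_i D_i=\vee_i(C\wedge D_i)$, i.e., (2); conversely, decomposing $\cl_{\sigma(P)}{\downarrow}B$ as $\vee_{b\in B}{\downarrow}b$ and applying the frame law twice (once with $C=\cl_{\sigma(P)}A$, once with $C={\downarrow}b$) recovers (4).

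\medskip
\noindent The step I expect to be trickiest is $(3)\Rightarrow(4)$: naively iterating (3) is tempting but goes in circles, and the cleaner route is the two-stage refinement above, which effectively re-uses (3) at the intermediate point $y\leq x$ inside $U$. One must be careful that the Scott-closed set $\cl_{\sigma(P)}A$ is a lower set to transfer membership from $x$ to $y$; this is essential and is the point at which ``meet-continuity'' is truly being used (via its reincarnation as (3)). Everything else is bookkeeping with closures, interiors, and complements.
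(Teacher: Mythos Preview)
The paper does not prove this lemma; it is quoted with attribution to \cite{Erne-2009} and \cite{Kou-Liu-Luo-2003} and left unproved. So there is no ``paper's own proof'' to compare against, and your proposal stands on its own.

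Your argument is correct. The backbone $(1)\Rightarrow(6)\Rightarrow(3)\Rightarrow(1)$ is clean; in $(1)\Rightarrow(6)$ you correctly use that $y\le\vee D$ together with meet-continuity forces the Scott-open $U$ to meet ${\downarrow}y\cap{\downarrow}D$, and the rest is straightforward. Your two-stage refinement for $(3)\Rightarrow(4)$ is the standard trick and works exactly as written (note that $\cl_{\sigma(P)}B=\cl_{\sigma(P)}{\downarrow}B$ since Scott-closed sets are lower, so ``applying (3) to $B$'' is legitimate even though the hypothesis only gives $x\in\cl_{\sigma(P)}{\downarrow}B$). The duality $(4)\Leftrightarrow(5)$ via complementation is correct: every lower set $L$ is $P\setminus{\uparrow}(P\setminus L)$, and interior/closure swap under complement. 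The frame-law argument for $(2)\Leftrightarrow(4)$ is also right; the key identity $\bigcup_{a\in A,\,b\in B}({\downarrow}a\cap{\downarrow}b)={\downarrow}A\cap{\downarrow}B$ that you implicitly use in $(2)\Rightarrow(4)$ holds by elementary order reasoning.

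One remark: you rightly note that Definition~\ref{def-MC-poset} as printed contains a typo (it reads $\ii_{\sigma(P)}$ where it should read $\cl_{\sigma(P)}$, and the first ${\downarrow}$ is missing its argument $x$). Your corrected reading $x\le\vee D\Rightarrow x\in\cl_{\sigma(P)}({\downarrow}x\cap{\downarrow}D)$ is the intended one and matches the cited sources.
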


\begin{proposition}\label{prop-continuous-meet-quasicontinuous} (\cite[Theorem 2.5]{Kou-Liu-Luo-2003}) For a dcpo, the following two conditions are equivalent:
\begin{enumerate}[\rm (1)]
\item $P$ is continuous.
\item $P$ is quasicontinuous and meet-continuous.
\end{enumerate}
\end{proposition}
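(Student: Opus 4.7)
The plan is to prove both implications, with the bulk of the work being the converse. For $(1)\Rightarrow(2)$, continuity gives quasicontinuity by considering singleton finite sets: the directed family $\Downarrow x$ yields a filtered subfamily $\{\ua u : u\in\Downarrow x\}$ of $\{\ua F : F\in w(x)\}$ whose intersection is $\ua x$, and this readily upgrades to filteredness of the whole family $\{\ua F : F\in w(x)\}$. For meet-continuity, if $x\le \vee D$ with $D$ directed then each $u\ll x$ satisfies $u\le d$ for some $d\in D$, so $\Downarrow x\subseteq \da x\cap \da D$; since $x=\vee \Downarrow x$, we get $x\in \cl_{\sigma(P)}(\da x\cap \da D)$, which gives meet-continuity through condition~(3) of Lemma~\ref{lem-MC-poset-charac}.

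For the substantive direction $(2)\Rightarrow(1)$, I would prove continuity via the topological characterization of Proposition~\ref{prop-continuous-charact}(2): it suffices to show that for every $x\in P$ and every Scott-open $U$ with $x\in U$, there exists $u\in P$ with $x\in \ii_{\sigma(P)}\ua u\subseteq \ua u\subseteq U$. Fix such $x$ and $U$. By quasicontinuity (Proposition~\ref{prop-quasicontinuous-charact}) pick a finite set $F=\{u_1,\dots,u_n\}$ with
$$x\in \ii_{\sigma(P)}\ua F\subseteq \ua F\subseteq U.$$
Write $\ua F=\ua u_1\cup\cdots\cup \ua u_n$. By iterating condition~(5) of Lemma~\ref{lem-MC-poset-charac}, meet-continuity yields the distributive identity
$$\ii_{\sigma(P)}(\ua u_1\cup\cdots\cup \ua u_n)=\ii_{\sigma(P)}\ua u_1\cup\cdots\cup \ii_{\sigma(P)}\ua u_n.$$
Hence $x\in \ii_{\sigma(P)}\ua u_i$ for some $i$, and then $\ii_{\sigma(P)}\ua u_i\subseteq \ua u_i\subseteq \ua F\subseteq U$ gives what we need. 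Proposition~\ref{prop-continuous-charact} now implies $P$ is continuous.

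The main obstacle is precisely the step of passing from the finite set $F\ll x$ supplied by quasicontinuity to a single element of the form $u\ll x$ that still witnesses a neighborhood of $x$. There is no reason a priori for one $u_i\in F$ to satisfy $x\in \ii_{\sigma(P)}\ua u_i$, and the only bridge available is the distributivity of $\ii_{\sigma(P)}$ over finite unions of principal filters; this is exactly the content of meet-continuity as encoded in Lemma~\ref{lem-MC-poset-charac}(5). Once that distributive law is in hand, the argument reduces to a short pigeonhole, and both characterizations of quasicontinuity and continuity (Propositions~\ref{prop-quasicontinuous-charact} and~\ref{prop-continuous-charact}) do the rest.
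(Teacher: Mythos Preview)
Your argument is correct. Note, however, that the paper does not supply its own proof of this proposition: it simply cites \cite[Theorem 2.5]{Kou-Liu-Luo-2003} and moves on. So there is nothing to compare against directly.

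That said, your $(2)\Rightarrow(1)$ argument is exactly the template the paper uses one proposition later, in its proof of Proposition~\ref{prop-algebraic-meet-quasialgebraic}: pick a finite $F$ witnessing the quasi-side hypothesis, then invoke condition~(5) of Lemma~\ref{lem-MC-poset-charac} to split $\ii_{\sigma(P)}\ua F$ as a union of $\ii_{\sigma(P)}\ua u_i$, and pigeonhole $x$ into one of them. The only difference is that in the quasialgebraic case the paper works with $\mathrm{min}(F)$ and needs an extra step to conclude $\ua v$ is itself Scott open; in your continuous case no such refinement is needed, since Proposition~\ref{prop-continuous-charact}(2) only asks for $x\in \ii_{\sigma(P)}\ua u\subseteq \ua u\subseteq U$. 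One minor wording issue: in the $(1)\Rightarrow(2)$ direction you say the conclusion ``gives meet-continuity through condition~(3) of Lemma~\ref{lem-MC-poset-charac}'', but what you have actually verified is Definition~\ref{def-MC-poset} itself (with the evident typo there corrected from $\ii_{\sigma(P)}$ to $\cl_{\sigma(P)}$), not the stronger-looking condition~(3); the citation should point to the definition rather than the lemma.
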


For algebraic domains, we have the following similar result.

\begin{proposition}\label{prop-algebraic-meet-quasialgebraic}  For a dcpo, the following two conditions are equivalent:
\begin{enumerate}[\rm (1)]
\item $P$ is algebraic.

\item $P$ is quasialgebraic and meet-continuous.
\end{enumerate}
\end{proposition}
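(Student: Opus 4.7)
The plan is to mirror the structure of Proposition \ref{prop-continuous-meet-quasicontinuous}, working through both implications but with the algebraic refinement concentrated in the harder direction.

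For $(1)\Rightarrow(2)$, I would first observe that algebraic trivially implies quasialgebraic: for each $x\in P$ the family $\{\uparrow\{k\}:k\in K(P)\cap\mathord{\downarrow}x\}$ consists of $\uparrow F$'s with $F\ll F$ and singleton $F$'s are cofinal, so filteredness and the equality $\uparrow x=\bigcap\{\uparrow k:k\in K(P)\cap\mathord{\downarrow}x\}$ follow immediately from directedness of $K(P)\cap\mathord{\downarrow}x$ and the supremum condition. Since algebraic also implies continuous, Proposition \ref{prop-continuous-meet-quasicontinuous} gives meet-continuity for free.

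For $(2)\Rightarrow(1)$, by Proposition \ref{prop-algebraic-charact} it suffices to show that for each $x\in P$ and each $U\in\sigma(P)$ with $x\in U$ there exists $k\in K(P)$ such that $x\in\uparrow k\subseteq U$. By Proposition \ref{prop-quasialgebraic-charact}, pick $F\in P^{(<\omega)}$ with $F\ll F$ and $x\in\uparrow F=\ii_{\sigma(P)}\uparrow F\subseteq U$. Replace $F$ by $G=\mathrm{min}(F)$; then $G$ is a finite antichain, $\uparrow G=\uparrow F$, and $G\ll G$ is automatic since the way-below condition only sees $\uparrow F$ (any directed set $D$ with $\vee D\in\uparrow G=\uparrow F$ meets $\uparrow F=\uparrow G$). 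Now invoke meet-continuity: Lemma \ref{lem-MC-poset-charac}(5) applied inductively yields
$$\ii_{\sigma(P)}\uparrow G=\ii_{\sigma(P)}\Bigl(\bigcup_{g\in G}\uparrow g\Bigr)=\bigcup_{g\in G}\ii_{\sigma(P)}\uparrow g.$$
Since $\uparrow G=\ii_{\sigma(P)}\uparrow G$, every $g\in G$ belongs to $\ii_{\sigma(P)}\uparrow g'$ for some $g'\in G$; the antichain property forces $g'=g$, so $g\in\ii_{\sigma(P)}\uparrow g$. This in turn makes $\uparrow g$ Scott open (any Scott-open neighborhood of $g$ contained in $\uparrow g$ is upper and contains $g$, hence equals $\uparrow g$), that is, $g\in K(P)$. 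Finally choose $g_{0}\in G$ with $g_{0}\leq x$ to obtain $x\in\uparrow g_{0}\subseteq\uparrow G\subseteq U$ with $g_{0}\in K(P)$.

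The main obstacle is extracting an individual compact element from the set-level condition $F\ll F$, since $F\ll F$ does \emph{not} in general imply $f\ll f$ for the members of $F$. The two small tricks that carry the argument are (i) reducing to an antichain $G=\mathrm{min}(F)$, which preserves $G\ll G$ because $\uparrow F$ is unchanged, and (ii) using meet-continuity (Lemma \ref{lem-MC-poset-charac}(5)) to split the Scott-interior of $\uparrow G$ over the points of $G$; together they force each $g\in G$ to lie in $\ii_{\sigma(P)}\uparrow g$, upgrading $F\ll F$ to pointwise compactness exactly where it is needed.
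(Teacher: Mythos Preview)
Your proof is correct and follows essentially the same approach as the paper: both pass to the antichain $\mathrm{min}(F)$, use meet-continuity (Lemma \ref{lem-MC-poset-charac}(5)) to write $\ii_{\sigma(P)}\uparrow\mathrm{min}(F)=\bigcup_{g}\ii_{\sigma(P)}\uparrow g$, and then use minimality to force $g\in\ii_{\sigma(P)}\uparrow g$, whence $\uparrow g\in\sigma(P)$ and $g\in K(P)$. The only cosmetic difference is that you show every $g\in\mathrm{min}(F)$ is compact, while the paper argues only for the particular $v\in\mathrm{min}(F)$ lying below $x$.
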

\begin{proof} (1) $\Rightarrow$ (2): Suppose that $P$ is an algebraic domain. Then by Proposition \ref{prop-continuous-charact} and Proposition \ref{prop-algebraic-charact}, $P$ is continuous. It follows from Proposition \ref{prop-continuous-meet-quasicontinuous} that $P$ is meet-continuous.

(2) $\Rightarrow$ (1): Assume that $P$ is quasialgebraic and meet-continuous. Let $U\in \sigma (P)$ and $x\in U$. Then by Proposition \ref{prop-quasialgebraic-charact}, there is $F=\{u_1, u_2, ..., u_n\}\in P^{(<\omega)}$ such that $x\in \ii_{\sigma(P)}\uparrow F=\uparrow F\subseteq U$. As $F$ is finite, we have $\uparrow F=\uparrow\mathrm{min}(F)$. Then by Lemma \ref{lem-MC-poset-charac}  $x\in \uparrow \mathrm{min}(F)=\ii_{\sigma (P)}\uparrow \mathrm{min}(F)=\bigcup_{u\in \mathrm{min}(F)} \ii_{\sigma (P)}\uparrow u$. So there is $v\in \mathrm{min}(F)$ such that $x\in \uparrow v\subseteq \bigcup_{u\in \mathrm{min}(F)} \ii_{\sigma (P)}\uparrow u$, whence there is $w\in \mathrm{min}(F)$ satisfying $v\in \ii_{\sigma(P)}\uparrow w\subseteq \uparrow w$. Then $v, w\in \mathrm{min}(F)$ and $w\leq v$, whence $v=m$. Hence $x\in \ii_{\sigma(P)}\uparrow v=\uparrow v\subseteq \uparrow F\subseteq U$. Thus $P$ is algebraic by Proposition \ref{prop-algebraic-charact}.
\end{proof}

\section{The main results}

\begin{theorem}\label{theor-quasicontinuous-not-quasialgebraic} Let $P$ be a quasicontinuous domain that fails to be quasialgebraic. Then there is a surjective monotone Lawson continuous function $f : P \rightarrow [0, 1]$.
\end{theorem}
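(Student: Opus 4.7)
The plan is to adapt the classical proof of Urysohn's lemma, with the pairs $(\ii_{\sigma(P)}\uparrow F,\uparrow F)$ playing the role of ``open set and its closure''. First, by Proposition~\ref{prop-quasialgebraic-charact} and Proposition~\ref{prop-quasicontinuous-charact}, the failure of quasialgebraicity yields $x_0\in P$ and $U_0\in\sigma(P)$ with $x_0\in U_0$ such that whenever $F\in P^{(<\omega)}$ satisfies $x_0\in\ii_{\sigma(P)}\uparrow F$ and $\uparrow F\subseteq U_0$, one necessarily has $\ii_{\sigma(P)}\uparrow F\subsetneq\uparrow F$. Replacing $U_0$ by $\ii_{\sigma(P)}\uparrow F^\ast$ for any witnessing $F^\ast$, we may also arrange $U_0\neq P$, so $P\setminus U_0\neq\emptyset$.

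The combinatorial tool is the outer-interpolation lemma: for $F\in P^{(<\omega)}$ and $V\in\sigma(P)$ with $\uparrow F\subseteq V$, there is $F'\in P^{(<\omega)}$ with $\uparrow F\subseteq\ii_{\sigma(P)}\uparrow F'\subseteq\uparrow F'\subseteq V$; this follows by applying Proposition~\ref{prop-quasicontinuous-charact} pointwise on $F$ and taking $F'$ to be the finite union of the resulting sets. Enumerating the dyadic rationals in $(0,1]$ level by level, I would construct finite sets $F_r$ and pairs $V_r:=\ii_{\sigma(P)}\uparrow F_r$, $W_r:=\uparrow F_r$ such that (i) $x_0\in V_r$, (ii) $W_r\subseteq U_0$, (iii) $V_r\subsetneq W_r$, and (iv) $V_s\subseteq W_s\subseteq V_r\subseteq W_r$ whenever $r<s$. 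Start at $r=1$ with $F_1=F^\ast$; for a newly-added $r$, let $r^-<r<r^+$ denote its already-handled neighbours (reading $V_{r^-}$ as $U_0$ if no $r^-$ exists) and apply the interpolation lemma to $\uparrow F_{r^+}\subseteq V_{r^-}$. Properties (i), (ii), (iv) are automatic from the nesting, and (iii) follows from the standing failure at $(x_0,U_0)$.

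Define $f\colon P\to[0,1]$ by $f(p)=\sup\{r\in(0,1]\cap\mathbb{D}:p\in V_r\}$, with $\sup\emptyset=0$. Monotonicity is immediate since each $V_r$ is an upper set. For Lawson continuity, $f^{-1}((a,1])=\bigcup_{r>a}V_r$ is Scott open, and using the nesting one checks $f^{-1}([0,b))=\bigcup_{r<b}(P\setminus W_r)$, a union of Lawson open sets because each $W_r=\bigcup_{x\in F_r}\uparrow x$ is Lawson closed (a finite union of principal upper sets, each closed in the lower topology). Surjectivity at $0$, at $1$, and at dyadic $t\in(0,1)$ is direct: $f(p)=0$ for $p\in P\setminus U_0$, $f(x_0)=1$, and $f(p)=t$ for any $p\in W_t\setminus V_t$ (nonempty by (iii)).

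The main obstacle is surjectivity at non-dyadic $t\in(0,1)$. Setting $K_t=\bigcap_{s<t,\,s\in\mathbb{D}}\uparrow F_s$ and $K_t^+=\bigcup_{s>t,\,s\in\mathbb{D}}V_s$, any $p\in K_t\setminus K_t^+$ satisfies $f(p)=t$, so I would show $K_t\not\subseteq K_t^+$. The family $\{\uparrow F_s:s<t\}$ is filtered (take the maximum of the indices) and $K_t^+$ is Scott open, so Corollary~\ref{cor-Rudin-lemma} supplies $s_0<t$ with $\uparrow F_{s_0}\subseteq K_t^+$. But $\uparrow F_{s_0}$ is Scott compact and $\{V_s:s>t\}$ is a directed Scott-open cover, hence some single $s^\ast>t$ yields $\uparrow F_{s_0}\subseteq V_{s^\ast}$; then the chain $V_{s^\ast}\subseteq W_{s^\ast}\subseteq V_{s_0}\subseteq W_{s_0}=\uparrow F_{s_0}\subseteq V_{s^\ast}$ collapses to $V_{s_0}=W_{s_0}$, contradicting (iii). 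This Rudin-lemma step is the heart of the argument.
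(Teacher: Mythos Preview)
Your proposal is correct and follows essentially the same Urysohn-style strategy as the paper: locate a witness $(x_0,U_0)$ to the failure of quasialgebraicity, interpolate a dyadic-indexed chain of finitely generated upper sets with strict $\ii_{\sigma(P)}\uparrow F_r\subsetneq\uparrow F_r$ at every level, define $f$ as the supremum of indices, and use Corollary~\ref{cor-Rudin-lemma} for surjectivity at the non-dyadic values. The only cosmetic differences are that the paper indexes over all of $[0,1]$ (with $\uparrow F(1)=\uparrow x$ and $\uparrow F(0)=\uparrow F$) rather than $(0,1]$ with $U_0$ as a limiting boundary, and that in the surjectivity step the paper observes directly that $\uparrow F_{s_0}\subseteq K_t^+\subseteq\uparrow F_{s_0}$ forces $\uparrow F_{s_0}\in\sigma(P)$, so your compactness/directed-cover detour, while correct, is not needed.
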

\begin{proof} First, we define a binary relation $\sqsubseteq$ on $\mathbf{Fin} P$ by $\uparrow F\sqsubseteq \uparrow G$ if and only if $\uparrow F\subseteq \ii_{\sigma(P)}\uparrow G$. As $P$ is quasicontinous, by Proposition \ref{prop-quasicontinuous-charact} the relation $\sqsubseteq$ on $\mathbf{Fin} P$ is idempotent, that is, $\sqsubseteq$ is transitive and satisfies the following interpolation property:
\vskip 1mm
(INT) For all $\uparrow F, \uparrow G\in \mathbf{Fin}F$  with $\uparrow F\sqsubseteq \uparrow G$, there exists $\uparrow H\in \mathbf{Fin} P$  such that $\uparrow F\sqsubseteq \uparrow H\sqsubseteq \uparrow G$.
\vskip 1mm
Second, for any $W\in\sigma(P)$, let $\mathcal F_W=\{\uparrow F\in \mathbf{Fin} P : ~\!\!\uparrow F\in \sigma(P), \uparrow F\subseteq W\}=\{\uparrow F\in \mathbf{Fin} P : ~\!\!\ii_{\sigma(P)}\uparrow F$ $=\uparrow F\subseteq W\}$. As $P$ is not quasialgebraic, by Proposition \ref{prop-quasialgebraic-charact} there is $U\in \sigma(P)\setminus \{\emptyset\}$ such that $U\neq \bigcup \mathcal F_U$. Choose $x\in U\setminus \bigcup \mathcal F_U$. Then by Proposition \ref{prop-quasicontinuous-charact} there is $\uparrow F\in \mathbf{Fin} P$ with $x\in \ii_{\sigma(P)}\uparrow F\subseteq \uparrow F\subseteq U$. Then $\uparrow x\not\in \mathcal F_U, \uparrow F\not\in\mathcal F_U$ (i.e., $\ii_{\sigma (P)}\uparrow x\neq\uparrow x$ and $\ii_{\sigma (P)}\uparrow F\neq\uparrow F$), and $\uparrow x\sqsubseteq \uparrow F$. Let $B=\{\frac{m}{2^n} : m, n\in \mathbb{N}, m\leq 2^n\}$ be the set of dyadic rational numbers in
$[0,1]$. Then by the interpolation property of $\sqsubseteq$ and induction, we can get a family $\{\uparrow F(b) : b\in B\}\subseteq \mathbf{Fin} P$ such that
\vskip 1mm
 (i) $\uparrow F(0)=\uparrow F, \uparrow F(1)=\uparrow x$, and
\vskip 1mm
 (ii) $\uparrow F(b_2)\sqsubseteq \uparrow F(b_1)$ whenever $b_1< b_2$.
\vskip 1mm
For any $b\in B$, we have $\uparrow F(1)\sqsubseteq \uparrow F(b)\sqsubseteq\uparrow F(0)$. So $x\in \ii_{\sigma (P)}\uparrow F(b)\subseteq \uparrow F(b)\subseteq \ii_{\sigma (P)}\uparrow F\subseteq \uparrow F\subseteq U$. As $x\in U\setminus \bigcup \mathcal F_U$, we get the following two conclusions:
\vskip 1mm

(iii) $\ii_{\sigma(P)}\uparrow F(b)\subsetneqq \uparrow F(b)$ for any $b\in B$, and
\vskip 1mm
(iv) $\uparrow F(b_2)\subsetneqq \ii_{\sigma(P)} \uparrow F(b_1)\subsetneqq\uparrow F(b_1)$ whenever $b_1< b_2$.
\vskip 1mm
Define a function $f : P \rightarrow  [0, 1]$ by

$$f(y)=
	\begin{cases}
	\mbox{sup} \{b\in B : b\in \uparrow F(b)\}, & y\in \uparrow F,\\
0,&  otherwise.\\
	\end{cases}$$

As $t=\vee (\downarrow t\cap (B\setminus \{t\})$ for all $t\in [0, 1]$, using (i) and (ii) we can easily verify that $f(y)=\mbox{sup} \{b\in B : b\in \ii_{\sigma(P)}\uparrow F(b)\}$ for any $y\in \uparrow F$. Clearly, $f$ is monotone, $f(x)=1$ and $f(P\setminus \uparrow F)=\{0\}$ (note that $P\setminus \uparrow F\neq\emptyset$ by $\uparrow F\not\in \sigma (P)$).

Now we prove that $f$ is Lawson continuous. For each $\alpha\in (0, 1]$ and each $\beta\in [0, 1)$, we have

$$\begin{array}{lll}
	f^{-1}([0, \alpha))& =\{y\in P: f(y)<\alpha\}\\
	           & =(P\setminus \uparrow F)\cup \{y\in \uparrow F : \mbox{sup}\{b\in B : b\in \uparrow F(b)\}<\alpha\}\\
	           & =(P\setminus \uparrow F)\cup (\uparrow F\cap\bigcup\limits_{b\in B, b<\alpha}(P\setminus \uparrow F(b)))\\
               & =(P\setminus \uparrow F)\cup \bigcup\limits_{b\in B, b<\alpha}(P\setminus \uparrow F(b))\in \omega(p), \mbox{and}\\
	\end{array}$$

$$\begin{array}{lll}
	f^{-1}((\beta, 1])& =\{y\in P : f(y)>\beta\}\\
	           & =\{y\in \uparrow F : \mbox{sup}\{b\in B : b\in \uparrow F(b)\}<\alpha\}\\
	           & =\{y\in \uparrow F : \mbox{sup}\{b\in B : b\in \ii_{\sigma(P)}\uparrow F(b)\}>\beta\}\\
               & =\bigcup\limits_{b\in B, b>\beta}\ii_{\sigma(P)}\uparrow F(b)\in \sigma(P).\\
	\end{array}$$

\noindent Therefore, $f$ is Lawson continuous.

Finally, we show that $f$ is surjective. Let $t\in (0, 1)$. For any $b\in B$ with $b<t$. Choose $b^\prime\in B$ with $b<b^\prime <t$. Then for any $r\in B$ with $t<r$, we have $\ii_{\sigma(P)}\uparrow F(r)\subsetneqq \uparrow F(r)\subsetneqq \ii_{\sigma(P)}\uparrow F(b^\prime)\subsetneqq \uparrow F(b^\prime)\subsetneqq \ii_{\sigma(P)}\uparrow F(b)\subsetneqq\uparrow F(b)$, and hence $f^{-1}((t, 1])=\bigcup\limits_{r\in B, r>t}\uparrow F(b^\prime)\subseteq\ii_{\sigma(P)}\uparrow F(b^\prime)\subsetneqq \uparrow F(b)$. So $f^{-1}((t, 1])\subseteq \bigcap\limits_{b\in B, b<t}\uparrow F(b)$. By (iv) $\{\uparrow F(b) : b\in B, b<t\}\subseteq \mathbf{Fin} P$ is filtered. Then $\bigcap\limits_{b\in B, b<t}\uparrow F(b)\neq f^{-1}((t, 1])$, for otherwise $\bigcap\limits_{b\in B, b<t}\uparrow F(b)= f^{-1}((t, 1])$ would imply $\uparrow F(b)\subseteq f^{-1}((t, 1])$ for some $b\in B$ with $b<t$ by Corollary \ref{cor-Rudin-lemma}, whence $\uparrow F(b)= f^{-1}((t, 1])\in \sigma(P)$, a contradiction. Choose $z\in \bigcap\limits_{b\in B, b<t}\uparrow F(b)\setminus f^{-1}((t, 1])$. Then $f(z)=t$. This completes the proof that $f(P)=[0, 1]$.
\end{proof}

\begin{remark}\label{rem-Uryshon-lemma} Let $X$ be a topological space and $\mathcal O(X)$ be the set of all open subsets of $X$. Define a relation $\sqsubseteq_t$ on $\mathcal O(X)$ by $V\sqsubseteq_t W$ if and only if $\overline{V}\subseteq  W$, where $\overline{V}$ is the closure of $V$ in $X$. Obviously, $\sqsubseteq_t$ is transitive. If $X$ is a normal space, then $\sqsubseteq_t$ satisfies the interpolation property (INT). For a pair $A, B$ of disjoint closed subsets of a normal space $X$, by the normality of $X$, there is an open set $U$ such that $A\subseteq  U \subseteq \overline{U}\subseteq X\setminus B$. The main technique used in the proof of Urysohn's lemma is to construct a family $\{U(b) : b\in B\}\subseteq \mathcal O(X)$ (where $B=\{\frac{m}{2^n} : m, n\in \mathbb{N}, m\leq 2^n\}$) such that

\vskip 1mm
(1) $U(1)=U, U(0)=X\setminus B$, and

\vskip 1mm

(2) $U(b_2)\sqsubseteq_t U(b_1)$ whenever $b_1 <b_2$,

\vskip 1mm

\noindent and define the function $f : X \rightarrow [0, 1]$ by

$$f(y)=
	\begin{cases}
	\mbox{sup} \{b\in B : b\in U(b)\}, & y\in X\setminus B,\\
0,&  otherwise.\\
	\end{cases}$$

\end{remark}

\begin{remark}\label{rem-technique-Urysohn-lemma}
In \cite{Xu-1995}, we first drew on the technique of proof of Urysohn's lemma to present a direct approach to
the construction of fundamental homomorphisms of $M$-continuous
lattices into the unit interval $[0, 1]$ and show that $M$-continuous lattices
admit enough such homomorphisms into $[0, 1]$ to separate points. Later, in \cite{Xu-2001} it was adopted to constructively prove
that if $P$ is a quasicontinuous domain and all lower closed subsets in
$(P, \lambda(P))$ are closed in $(P, \omega(P)))$, then $(P, \lambda(P))$ is strictly completely regular ordered space, which gave a partial answer to a problem posed by Jimmie Lawson in \cite{Lwason-1991}.
\end{remark}

Since $|[0, 1]|=c>\omega$, from Theorem \ref{theor-quasicontinuous-not-quasialgebraic} we directly deduce the following.

\begin{corollary}\label{cor-countable-quasicontinuous-is-quasialgebraic} Every countable quasicontinuous domain is quasialgebraic.
\end{corollary}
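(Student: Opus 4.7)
The plan is to deduce the corollary immediately from Theorem \ref{theor-quasicontinuous-not-quasialgebraic} by a cardinality argument, in the same spirit as Jia--Li--Luan's passage from their Scott-continuous retract theorem to the statement that every countable continuous domain is algebraic.

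I would argue by contraposition. Suppose $P$ is a quasicontinuous domain that is \emph{not} quasialgebraic. Then Theorem \ref{theor-quasicontinuous-not-quasialgebraic} supplies a surjective (monotone Lawson continuous) map $f : P \to [0,1]$. Surjectivity alone forces $|P| \geq |[0,1]| = c > \omega$, so $P$ cannot be countable. Contrapositively, every countable quasicontinuous domain is quasialgebraic.

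There is essentially no obstacle here: the deep content has already been absorbed into the construction of the Lawson continuous surjection in Theorem \ref{theor-quasicontinuous-not-quasialgebraic}, which in turn rests on the interpolation property (INT) of $\sqsubseteq$ on $\mathbf{Fin}\,P$ and the Urysohn-style dyadic indexing. Once that theorem is in hand, neither the monotonicity nor the Lawson continuity of $f$ is needed for the corollary; only its existence as a set-theoretic surjection is used, combined with the bare inequality $c > \omega$. Thus the proof of the corollary is literally a one-line application of Theorem \ref{theor-quasicontinuous-not-quasialgebraic}.
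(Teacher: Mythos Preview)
Your argument is correct and matches the paper's own justification exactly: the paper simply notes that $|[0,1]|=c>\omega$ and deduces the corollary directly from Theorem~\ref{theor-quasicontinuous-not-quasialgebraic}. As you observe, only the surjectivity of $f$ is needed, not its monotonicity or Lawson continuity.
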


Finally, by  Proposition \ref{prop-continuous-meet-quasicontinuous}, Proposition \ref{prop-algebraic-meet-quasialgebraic} and Corollary \ref{cor-countable-quasicontinuous-is-quasialgebraic}, we get the following.

\begin{corollary}\label{cor-countable-continuous-is-algebraic}  (\cite[Corollary 1]{Jia-Li-Luan-2025}) Every countable continuous domain ais algebraic.
\end{corollary}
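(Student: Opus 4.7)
The plan is to bootstrap algebraicity from quasialgebraicity by exploiting the two parallel decompositions established earlier in the paper. Propositions \ref{prop-continuous-meet-quasicontinuous} and \ref{prop-algebraic-meet-quasialgebraic} together say that continuity and algebraicity each factor as the respective quasi-version plus meet-continuity: $P$ is continuous iff $P$ is quasicontinuous and meet-continuous, and $P$ is algebraic iff $P$ is quasialgebraic and meet-continuous. Since meet-continuity depends only on the Scott topology of $P$ and is untouched by sharpening the description of $P$ from quasicontinuous to quasialgebraic, the only real step needed is the upgrade from quasicontinuous-and-countable to quasialgebraic, which is precisely Corollary \ref{cor-countable-quasicontinuous-is-quasialgebraic}.

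In detail, given a countable continuous domain $P$, first apply Proposition \ref{prop-continuous-meet-quasicontinuous} to conclude that $P$ is both quasicontinuous and meet-continuous. Next, invoke Corollary \ref{cor-countable-quasicontinuous-is-quasialgebraic}---itself a consequence of Theorem \ref{theor-quasicontinuous-not-quasialgebraic} via the cardinality comparison $|[0,1]|=c>\omega$, since a surjection $P\to[0,1]$ would force $|P|\geq c$---to promote $P$ from quasicontinuous to quasialgebraic. Finally, because $P$ is now both quasialgebraic and meet-continuous, Proposition \ref{prop-algebraic-meet-quasialgebraic} concludes that $P$ is algebraic.

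There is no substantive obstacle at this stage of the argument: the real work has been absorbed upstream into the Urysohn-style construction of Theorem \ref{theor-quasicontinuous-not-quasialgebraic}, and the corollary reduces to a short syllogism once that theorem, its immediate cardinality corollary, and the two meet-continuity decomposition propositions are in place. If one wanted to double-check soundness, the only thing worth verifying is that meet-continuity, as formulated in Definition \ref{def-MC-poset} and in the equivalent conditions of Lemma \ref{lem-MC-poset-charac}, is a property intrinsic to $P$ and is not conditioned on whether one is describing $P$ via $\Downarrow x$, via $K(P)\cap \downarrow x$, or via finite sets $F$ with $\uparrow F\in\sigma(P)$; this is immediate from the definitions.
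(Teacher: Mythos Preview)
Your argument is correct and follows exactly the route the paper takes: combine Proposition \ref{prop-continuous-meet-quasicontinuous}, Corollary \ref{cor-countable-quasicontinuous-is-quasialgebraic}, and Proposition \ref{prop-algebraic-meet-quasialgebraic} to pass from countable continuous to quasicontinuous and meet-continuous, then to quasialgebraic and meet-continuous, and finally to algebraic.
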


\end{document}